\newtheorem{Thm}{Theorem}
\newtheorem{Prop}[Thm]{Proposition}
\newtheorem{Def/Thm}[Thm]{Definition/Theorem}
\newtheorem{Lemma}[Thm]{Lemma}
\theoremstyle{remark}
\numberwithin{equation}{subsection}
\newcommand{\ra }{\rightarrow}
\newcommand{\PP }{{\mathbb P}}
\newcommand{\CC }{{\mathbb C}}
\begin{document}

\title{A Generalization of Fulton-MacPherson Configuration Spaces}

\author{Bumsig Kim}
\address{School of Mathematics, Korea Institute for Advanced Study,
Hoegiro 87, Dongdaemun-gu, Seoul, 130-722, Korea}
\email{bumsig@kias.re.kr}

\author{Fumitoshi Sato}
\address{Takamatsu National College of Technology, 355 Chokushi-cho, Takamatusu-shi, Kagawa,
761-8058, Japan}
\email{fumi@kias.re.kr}

\begin{abstract}
We construct  a wonderful compactification of the variety
parameterizing  $n$ distinct labeled points in $X$ away from $D$,
where $X$ is a nonsingular variety and $D$ is a nonsingular proper
subvariety. When $D$ is empty, it coincides with the
Fulton-MacPherson configuration space.
\end{abstract}


 \maketitle

\section{Introduction}

\subsection{} Let $X$ be a complex connected
nonsingular algebraic variety $X$ and let $D$ be a nonsingular
closed proper subvariety of $X$. The goal of this paper is to
construct the following two spaces:

\begin{itemize}

\item A compactification $X_D^{[ n ]}$ of the configuration space
of $n$ labeled points in $X\setminus D$, \lq\lq not allowing the
points to meet $D$."

\item A compactification $X_{D}[ n ]$ of the configuration space
of $n$ {\em distinct} labeled points in $X\setminus D$, \lq\lq not
allowing the points to
 meet each other as well as $D$."

\end{itemize}

To describe the constructions, we introduce some notation. Let
$D=\bigcup _c D_c$ where $D_c$ are irreducible components of $D$.
For a subset $S$ of $N:=\{1,2,...,n\}$ denote by $D_{c,S}$ the
collection of points $x$ in $X^n$ whose $i$-th
component $x_i$ is in $D_c$ if $i\in S$.
For a subset $I$ (with $|I|\ge 2$) of $N$ let $\Delta _I\subset
X^n$ be the diagonal consisting of $x$ satisfying $x_i=x_j$
whenever $i,j\in I$. We denote by $\mathrm{Bl}_ZX$ the blowup of a
variety $X$ along a closed subvariety $Z$.

Then:

\begin{itemize}

\item Define $X_D^{[n]}$  to be the closure of
$X^n\setminus\bigcup _{c,S}D_{c,S}$ diagonally embedded in
$$X^n\times \prod _{c,\ S \subset N,\ |S|\ge 1} \mathrm{Bl}_{D_{c, S}}X^n .$$

\item Define $X_D[n]$ to be the closure of $(X\setminus
D)^n\setminus \bigcup _{|I|\ge 2} \Delta _I$ in the product
\[ X^{[n]}_{D}\times \prod _{I\subset N,\  |I|\ge 2}
\mathrm{Bl}_{\widetilde{\Delta} _I} X^{[n]}_D , \] where
$\widetilde{\Delta}_I$ is a proper transform of $\Delta_I$.
\end{itemize}

These spaces satisfy wonderful properties as follows.

\begin{Thm}\label{D}

\begin{enumerate}

\item\label{D1} The variety $X_D^{[n]}$ is nonsingular.

\item\label{D2} There is a \lq\lq universal" family $X_D^{[n] +}
\ra X_D^{[n]}$: It is a flat family of stable degenerations of $X$
with $n$ smooth labeled points away from $D$.

\item\label{D3} The boundary $X_D^{[n]}\setminus (X^n\setminus\bigcup _{c,S}D_{c,S})$
is a union of divisors $\widetilde{D}_{c,S}$ corresponding to
$D_{c,S}$, $|S|\ge 1$. Any set of these divisors intersects
transversally.

\item\label{D4} The intersection of boundary divisors
$\widetilde{D}_{c_1,S_1},...,\widetilde{D}_{c_a,S_a}$ is nonempty
if and only if they are nested in the sense that each pair $S_i$
and $S_k$ is:

\begin{itemize}

\item  disjoint if $c_i\ne c_k$;

\item one is contained in the other if $c_i = c_k$.

\end{itemize}

\end{enumerate}
\end{Thm}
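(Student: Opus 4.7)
The plan is to realize $X_D^{[n]}$ as an iterated blowup of $X^n$ along the centers $D_{c,S}$ with $|S|\ge 2$; the factors in the defining product with $|S|=1$ are tautological, since $D_{c,\{i\}}$ is already a Cartier divisor. The essential combinatorial input is the intersection pattern of the centers: for $c_1=c_2=c$, $D_{c,S_1}\cap D_{c,S_2}=D_{c,S_1\cup S_2}$, transverse iff $S_1\cap S_2=\emptyset$; for $c_1\ne c_2$, disjointness of $D_{c_1}$ and $D_{c_2}$ forces $D_{c_1,S_1}\cap D_{c_2,S_2}=\emptyset$ whenever $S_1\cap S_2\ne\emptyset$, and the remaining cases are transverse. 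Locally around a point whose $i$th projection lies on $D_c$ precisely for $i\in T$, only the centers $(c,S)$ with $S\subset T$ are relevant, so the blowup is modeled on a boolean arrangement of coordinate subspaces.

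I would order the centers by non-increasing $|S|$, blowing up disjoint centers of equal codimension simultaneously. By induction on this order one verifies that (a) the proper transform of the next center remains smooth, (b) it meets each exceptional divisor born so far transversally, and (c) the resulting space $\widehat{X^n}$ is canonically isomorphic to $X_D^{[n]}$ via the universal property of blowing up applied iteratively in both directions. Parts (a)--(c) immediately deliver \eqref{D1} and \eqref{D3} (the boundary consists of the exceptional divisors $\widetilde D_{c,S}$ for $|S|\ge 2$ together with the strict transforms of the $D_{c,\{i\}}$, crossing normally). For \eqref{D4}, emptiness in the non-nested case splits into two subcases: incomparable $S_i,S_k$ with $c_i=c_k=c$ are separated because the strictly deeper stratum $D_{c,S_i\cup S_k}$ (containing their intersection) is blown up earlier in the order, leaving the proper transforms of $D_{c,S_i}$ and $D_{c,S_k}$ disjoint; overlapping $S_i,S_k$ with $c_i\ne c_k$ have empty intersection already in $X^n$. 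Non-emptiness in the nested case I would establish by an explicit local construction: within each chain pick a sequence of normal directions lifting compatibly through the successive blowups, using commutativity across different $c$ ensured by disjointness of the $D_c$.

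For the universal family \eqref{D2}, the natural candidate is $X_D^{[n]+}:=X_D^{[n+1]}$ with the forgetful map $\pi:X_D^{[n+1]}\to X_D^{[n]}$ dropping the $(n+1)$st point. This map exists because the projection $X^{n+1}\to X^n$ extends to the ambient product: factors $\mathrm{Bl}_{D_{c,S}}X^{n+1}$ with $n+1\notin S$ descend to the corresponding factor of $X_D^{[n]}$, while those with $n+1\in S$ are simply forgotten; these morphisms agree on the common open part and hence pass to the closures. The generic fiber is $X$, and over a boundary stratum $\bigcap_j\widetilde D_{c_j,S_j}$ indexed by a nested family the fiber is a stable degeneration of $X$ obtained by gluing bubble components along the $D_{c_j}$, shaped by the projective completion of the relevant normal bundles. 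Flatness and stability are then verified by local calculations in normal-form charts around each stratum.

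The main obstacle, I expect, is the inductive core (a)--(b): after simultaneously blowing up several non-nested centers of equal codimension, one must certify that the proper transforms of the remaining centers stay smooth \emph{inside} the partial blowup and meet the accumulating exceptional locus transversally. Via the local boolean-arrangement model this reduces to an explicit coordinate calculation, but coherence across the different components $c$, whose primitive divisors lie on the same coordinate axes yet are disjoint on $X$, requires careful bookkeeping. A secondary subtlety is the isomorphism (c): one direction is immediate from the universal property of blowing up, but the converse requires, for instance, a normality argument for $X_D^{[n]}$ combined with Zariski's main theorem.
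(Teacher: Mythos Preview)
Your overall strategy---realize $X_D^{[n]}$ as an iterated blowup of $X^n$ along the $D_{c,S}$ in a dimension-respecting order, then read off smoothness, the normal-crossings boundary, and the nesting criterion from the combinatorics of the centers---is the paper's approach as well. The paper simply packages it by observing that $\{D_{c,S}\}$ is a \emph{building set} in the sense of L.~Li, so that parts (\ref{D1}), (\ref{D3}), (\ref{D4}) are immediate corollaries of Li's general theorem on wonderful compactifications; your inductive verification of (a)--(c) would, if carried out, amount to reproving the relevant portion of that theorem in this special case.

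There are, however, two genuine errors. First, your claim that the factors with $|S|=1$ are tautological because $D_{c,\{i\}}$ is Cartier is false: the theorem makes no assumption that $D$ is a divisor in $X$, and when $\codim_X D_c\ge 2$ the subvariety $D_{c,\{i\}}\subset X^n$ has codimension $\ge 2$ as well, so $\bl_{D_{c,\{i\}}}X^n\ne X^n$. You must include the $|S|=1$ centers in the blowup sequence (indeed the paper's inductive order begins with $D_{\{1\}}$).

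Second, and more seriously, your candidate for the universal family is wrong. You set $X_D^{[n]+}=X_D^{[n+1]}$, but the paper explicitly warns that this fails unless $D$ is a divisor. The space $X_D^{[n+1]}$ includes a blowup along $D_{c,\{n+1\}}$, the locus where the \emph{extra} point alone lies on $D$; over a general point of $X_D^{[n]}$ the fiber of your forgetful map would then be $\bl_D X$ rather than $X$, so you do not get a family of degenerations of $X$. The correct construction is the iterated blowup of $X_D^{[n]}\times X$ along the centers $D_{T^+}$ for $T\subset N$ with $|T|\ge 1$ (so $T\ne\emptyset$): one bubbles only when the new point approaches $D$ \emph{together with} at least one of the original $n$ points. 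Flatness is then immediate, since source and target are nonsingular and the fibers are equidimensional.
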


\begin{Thm}\label{Delta}

\begin{enumerate}

\item The variety $X_D[n]$ is nonsingular.

\item\label{Delta2} There is a \lq\lq universal" family $X_D[n]^+
\ra X_D[n]$: It is a flat family of stable degenerations of $X$
with $n$ {\em distinct} smooth labeled points away from $D$.

\item The boundary $X_D[n]\setminus ((X\setminus D)^n\setminus
\cup _I \Delta _I)$ is a union of divisors $\widetilde{D}_{c,S}$
and $\widetilde{\Delta }_I$, corresponding to $D_{c,S}$, $|S|\ge
1$, and $\Delta _I$ with $|I|\ge 2$. Any set of these divisors
intersects transversally.

\item\label{nested} The intersection of boundary divisors $
\widetilde{D}_{c_1,S_1},...,\widetilde{D}_{c_a,S_a},\,
\widetilde{\Delta} _{I_1},...,\widetilde{\Delta} _{I_b}$ are
nonempty if and only if they are nested.  Here the collection $\{
\widetilde{D}_{c_i,S_i}, \widetilde{\Delta} _{I_j} \} _{1\le i \le
a, \ 1\le j\le b}$ is called nested if $\{ \widetilde{D}_{c_i,S_i}
\} _{1\le i \le a}$ is nested; for each pair $I_j$ and $I_l$
either they are disjoint or one is contained in the other; and for
each pair $S_i$ and $I_k$, either they are disjoint or $I_k$ is
contained in $S_i$.

\end{enumerate}

\end{Thm}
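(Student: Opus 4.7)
The plan is to realize $X_D[n]$ as an iterated smooth blowup of $X_D^{[n]}$. Enumerate the subsets $I \subset N$ with $|I|\ge 2$ in an order of non-increasing cardinality, $I_1, I_2, \ldots$, and set $Y_0 := X_D^{[n]}$ and $Y_k := \mathrm{Bl}_{\widetilde{\Delta}_{I_k}} Y_{k-1}$, where $\widetilde{\Delta}_{I_k}\subset Y_{k-1}$ is the proper transform of the diagonal $\Delta_{I_k}$. A first step is to identify the terminal $Y$ with the closure-theoretic $X_D[n]$ of the statement; this is a universal-property argument entirely parallel to the corresponding comparison for the Fulton-MacPherson space $X[n]$.

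The inductive backbone of the argument is then the claim that each $Y_k$ is smooth, that the proper transforms of the divisors $\widetilde{D}_{c,S}$ together with the exceptional divisors created at steps $1,\ldots,k$ form a simple normal crossings divisor, and that the next center $\widetilde{\Delta}_{I_{k+1}}$ is smooth in $Y_k$ and meets this SNC divisor transversally. Granting this, parts (1) and (3) of the theorem follow from the standard behavior of blowups along smooth centers transverse to an SNC boundary. The inductive step is local on $Y_k$ and reduces to a computation in local coordinates on $X^n$ adapted to the product structure of the $D_{c,S}$ and the diagonals $\Delta_I$, using the explicit chart description of $X_D^{[n]}$ afforded by Theorem~\ref{D}, and then tracking coordinates through each diagonal blowup exactly as in Fulton-MacPherson.

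Part (\ref{nested}) is the main obstacle. The ``if'' direction is constructive: given a nested collection, I would produce a point of the intersection by exhibiting a ``screen,'' i.e., a tree-like infinitesimal datum in the sense of Fulton-MacPherson, now decorated with labels recording membership in the components $D_c$. The ``only if'' direction is more subtle, and the crucial new case is that of a pair $\widetilde{D}_{c,S}$ and $\widetilde{\Delta}_I$ with $I \cap S \ne \emptyset$ and $I \not\subset S$. On $\Delta_I$, if $x_i \in D_c$ for one $i \in I$ then $x_j \in D_c$ for every $j \in I$, so scheme-theoretically $\Delta_I \cap D_{c,S} \subset D_{c, S \cup I}$. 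The locus $D_{c, S\cup I}$ has already been blown up in forming $X_D^{[n]}$, and the same toy mechanism that separates the two coordinate axes of $\A^2$ after blowing up the origin shows that the proper transforms of $D_{c,S}$ and $\Delta_I$ become disjoint there. The other non-nested cases---among the $S_i$'s sharing a component index $c$, and among the $I_j$'s---are handled similarly, the latter case being precisely the Fulton-MacPherson argument.

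Finally, part (\ref{Delta2}) follows by taking $X_D[n]^+$ to be the pullback (and, where appropriate, the proper transform) of the flat family $X_D^{[n]+}\to X_D^{[n]}$ of Theorem~\ref{D}(\ref{D2}) along the successive blowups above, equipped with the proper transforms of the $n$ tautological sections. Flatness is preserved at each stage because, by the inductive claim, the center of the next blowup is transverse to the fibers; the generic fibers are manifestly copies of $X$ with $n$ distinct labeled points away from $D$; and the singular fibers are stable degenerations by construction of $X_D^{[n]+}$.
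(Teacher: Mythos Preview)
Your approach to parts (1), (3), and (\ref{nested}) is workable but differs from the paper's. The paper does not verify the inductive smoothness and SNC claims in local coordinates; instead it invokes Li's wonderful compactification machinery (Theorem~\ref{Li1}) applied to $Y=X_D^{[n]}$ with the collection $\{\widetilde{\Delta}_I : |I|\ge 2\}$. The entire burden then reduces to checking that this collection is a building set in $X_D^{[n]}$ (Proposition~\ref{poly}), which the paper proves not by coordinates but by a global argument: the proper transform $\widetilde{\Delta}_I$ is exactly the locus where the corresponding sections of $X_D^{[n]+}\to X_D^{[n]}$ agree, from which clean intersection and the factor decomposition are immediate. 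Your direct Fulton--MacPherson-style induction would work but is heavier; the paper's route trades the coordinate bookkeeping for one clean global observation. For part~(\ref{nested}) the paper's ``only if'' packages your ``two axes after blowing up the origin'' mechanism into a general lemma (Lemma~\ref{blowupLemma}(\ref{disjoint})), and the ``if'' is again reduced to Li's theorem applied to a smaller building set on the intersection $\bigcap_i \widetilde{D}_{c_i,S_i}$, rather than by constructing a screen directly.

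There is, however, a genuine gap in your part~(\ref{Delta2}). Pulling back $X_D^{[n]+}\to X_D^{[n]}$ along $X_D[n]\to X_D^{[n]}$ does not change the fibers: over a point of $X_D[n]$ lying above a point of $X_D^{[n]}$ where two labels coincide, the pulled-back fiber still carries coincident sections, so the family does not have $n$ \emph{distinct} labeled points. (Flatness under base change is automatic and requires no transversality-to-fibers argument, which suggests you may be conflating blowups of the base with blowups of the total space.) The paper instead defines $X_D[n]^+$ as the iterated blowup of $X_D[n]\times X$ along the subvarieties $D_{S^+}$ (inclusion order) and then along $\Delta_{I^+}$ (inclusion order); it is these diagonal blowups in the total space, not in the base, that separate the sections. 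The fibers are then Fulton--MacPherson degenerations of the fibers of $X_D^{[n]+}$, and flatness follows from equidimensionality of fibers between smooth varieties.
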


When $D$ is empty, then the construction of $X_D[n]$ is exactly
the Fulton-MacPherson compactification $X[n]$ of the configuration
space of $n$ distinct labeled points in $X$ (\cite{FM}).  The
meaning of the statements (2) in Theorems will be explained in
subsection \ref{expuniv}. For the definitions of
$\widetilde{D}_{c,S}$ and $\widetilde{\Delta }_I$, see subsection
\ref{notation}.

\medskip

To prove Theorems \ref{D} and \ref{Delta}, we use L. Li's general
work on wonderful compactifications (\cite{DP, MP, Hu, Li1}). For
the history of wonderful compactifications, we refer the reader to
\cite{Li1}. One may show our Theorems also by the conical
wonderful compactification (\cite{MP}).
The Chow rings and motives of the spaces constructed here are described
in \cite{Sa}.

\medskip

Our motivation for the construction of the spaces $X_D^{[n]}$ and $X_D[n]$ is their
use in the study of stable relative maps and stable relative (un)ramified maps, respectively. This will
be studied in detail elsewhere; here we give only a rough explanation of this application.
First note that one can interpret the stable relative maps of \cite{J.Li} as maps from curves to
the fibers of the universal family $X_D^{[n]+}$. Next, the paper \cite{KKO} constructs a
compactification of maps from
curves to  $X$
without allowing any domain component collapse to points. There, the targets are the fibers of
$X[n]^+$, the universal family over the Fulton-MacPherson configuration spaces.
 Precisely, modify $X$ by
 blowing up points $x$ where the components collapse and then gluing copies
of  $\PP (T_x\oplus \CC )$  along
the exceptional divisors $\PP(T_x)$ to obtain a new target.  For the relative version of \cite{KKO} with respect to $D$,
it is natural to use  the
fibers of $X_{D}[n]^+$ as targets.
The statement (1), (2), and (3) of Theorem \ref{Delta} will be some key
ingredients for establishing the properness and the perfect obstruction theory of the moduli space of such maps.

\subsection{Notation}\label{notation}
\begin{itemize}
\item As in \cite{FM}, for
 a subset $I$ of $N:=\{1,2,...,n\}$, let \[ I^+:=I \cup \{ n+1
 \}. \]

\item Let $Y_1$ be the blowup of a nonsingular complex variety $Y_0$
along a nonsingular closed subvariety $Z$. If $V$ is an
irreducible subvariety of $Y_0$, we will use $\widetilde{V}$ or
$V(Y_1)$  to denote

\begin{itemize}
\item the total transform of $V$, if $V \subset Z$;

\item the proper transform of $V$, otherwise.
\end{itemize}

If there is no risk to cause confusion, we will use simply $V$ to
denote $\widetilde{V}$. The space $\mathrm{Bl}_{\widetilde{V}}Y_1$
will be called the iterated blowup of $Y_0$ along centers $Z, V$
(with the order).

\item For a partition $I=\{
I_0, I_1,...,I_l \}$ of $N$, $\Delta _I$ denotes the
polydiagonal associated to $I$. We will also consider the binary operation
$I\wedge J$ on the set of all partitions defined by \[ \Delta
_I\cap \Delta _J = \Delta _{I\wedge J} \] as in \cite{Ul} (page
143).  We use $\Delta _{I_0}$ instead of $\Delta _I$ when $I=\{
I_0, I_1,...,I_l \}$ such that $|I_i|=1$ for all $i\ge 1$.

\item We
say that a collection $\mathcal{C}$ of closed subvarieties in a variety  {\em meets} or {\em
intersects transversely} if, for every pair of two disjoint
nonempty subsets $\mathcal{C}_1$ and $\mathcal{C}_2$ of
$\mathcal{C}$, the two subvarieties $\bigcap \mathcal{C}_1:=\bigcap _{Z\in \mathcal{C}_1}Z$ and
$\bigcap \mathcal{C}_2$ meet transversely (this includes the case
that they are disjoint).

\end{itemize}

\subsection{Acknowledgements} The authors thank Daewoong Cheong, Li Li, Yong-Geun
Oh, and Dafeng Zuo for useful discussions.
We also thank Ionu\c t Ciocan-Fontanine and Referee for valuable comments.
B.K. is partially
supported by NRF grant 2009-0063179.

\section{Proof of Main Theorems}
\subsection{Wonderful Compactifications}

We recall some results in \cite{Li1} which are needed in this
paper.

\medskip

A finite collection $\mathcal{G}$ of nonsingular, proper, nonempty
subvarieties of a nonsingular algebraic variety $Y$ is called a
{\em building set} if the following two conditions are satisfied.

\begin{enumerate}

\item For every $V$ and $W$ in $\mathcal{G}$, they intersect
cleanly, that is, the tangent bundle $T(V\cap W)$ of the
intersection coincides with the intersection of tangent bundles
$TV$ and $TW$ in $TY$.

\item  For the intersection $\bigcap\mathcal{C}$ of a subset
$\mathcal{C}$ of $\mathcal{G}$, an element $V$ in $\mathcal{G}$ is
called a {\em $\mathcal{G}$-factor} of $\bigcap\mathcal{C}$ if
\begin{itemize}
\item $V$ contains $\bigcap\mathcal{C}$ and
\item there is no
other $V'$ in $\mathcal{G}$, contained in $V$ and containing
$\bigcap\mathcal{C}$.

\end{itemize}

\noindent Then the second condition is as follows. The
collection $\mathcal{C}'$ of all $\mathcal{G}$-factors of
$\bigcap\mathcal{C} $ meets transversely and the intersection
$\bigcap\mathcal{C}' $ is exactly $ \bigcap\mathcal{C}$.

\end{enumerate}


Define the so-called {\em wonderful compactification}
$Y_{\mathcal{G}}$ of $Y$ with respect to $\mathcal{G}$ to be the
closure of $Y\setminus \bigcup _{V\in \mathcal{G}} V$ diagonally
embedded in
\[ Y\times \prod _{V\in\mathcal{G}}\mathrm{Bl}_{V}Y .\]
It has the following wonderful properties.

\begin{Thm}\label{Li1}{\em(\cite{Li1})}
\begin{enumerate}

\item The variety $Y_{\mathcal{G}}$ coincides with the iterated
blowup of $Y$ along all $V$ in $\mathcal{G}$ whenever the order of
centers $V$ is an inclusion order, or a building set order.

\item The boundary $Y_{\mathcal{G}}\setminus (Y\setminus \bigcup
_{V\in \mathcal{G}} V)$ is the union of divisors $\widetilde{V}$,
corresponding to $V\in \mathcal{G}$. The divisors
intersect transversally.

\item A subset $\mathcal{C}$ of $\mathcal{G}$ is nested if and
only if the intersection of all divisors $\widetilde{V}$, for
$V\in\mathcal{C}$, is nonempty

\end{enumerate}
\end{Thm}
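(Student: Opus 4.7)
The plan is to prove all three assertions simultaneously by induction on the cardinality of $\mathcal{G}$. The base case $|\mathcal{G}|=0$ is trivial. For the inductive step, I fix a minimal element $V_0 \in \mathcal{G}$ (minimal with respect to inclusion) and consider the blowup $\pi: Y_1 := \mathrm{Bl}_{V_0} Y \to Y$ with exceptional divisor $E_0$. Because every intersection $V_0 \cap V$ is clean and $V_0$ is minimal, the proper transform $\widetilde{V}$ of each $V \in \mathcal{G} \setminus \{V_0\}$ is again nonsingular, and one verifies directly that the new collection
\[ \mathcal{G}_1 := \{E_0\} \cup \{\widetilde{V} : V \in \mathcal{G}, \ V \neq V_0 \} \]
is a building set on $Y_1$ whose combinatorics mirrors that of $\mathcal{G}$: two elements are nested in $\mathcal{G}_1$ exactly when their preimages in $\mathcal{G}$ are nested.

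The central technical step, which I would isolate as a lemma, is precisely the preservation of the building-set axioms under this blowup. Clean intersection is preserved because blowing up a clean center is a local operation and interacts with proper transforms by the usual formula $\widetilde{V \cap W} = \widetilde{V} \cap \widetilde{W}$ on the complement of $E_0$, while along $E_0$ one uses the clean-intersection normal bundle splitting. The $\mathcal{G}_1$-factor condition for any intersection $\bigcap \mathcal{C}_1$ in $\mathcal{G}_1$ is verified by pulling back to $\mathcal{G}$-factors and checking that no spurious minimal covers appear; this is the combinatorial heart of Li's argument.

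Granting the lemma, assertion (1) follows as follows. The closure definition of $Y_{\mathcal{G}}$ is compatible with iterated blowups in the sense that blowing up $V_0$ first and then taking the closure with respect to $\mathcal{G}_1$ gives the same subvariety of the product, since $\mathrm{Bl}_{V_0} Y$ embeds canonically into $Y \times \mathrm{Bl}_{V_0} Y$. Applying the induction hypothesis to $(Y_1, \mathcal{G}_1)$ then identifies $Y_{\mathcal{G}}$ with the iterated blowup of $Y_1$ along $\mathcal{G}_1$, which by expanding is the iterated blowup of $Y$ along the full collection in either building-set or inclusion order. Assertions (2) and (3) then transfer inductively: the boundary of $Y_{\mathcal{G}}$ is the union of the inverse image of $E_0$ with the boundary of $(Y_1)_{\mathcal{G}_1}$, transversality propagates because the exceptional divisor of any single blowup meets all further proper transforms transversally, and the nested criterion for non-empty intersection is preserved since a family of divisors $\{\widetilde{V_i}\}$ has nonempty common intersection in $Y_{\mathcal{G}}$ iff the corresponding family in $(Y_1)_{\mathcal{G}_1}$ does.

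The main obstacle is verifying that $\mathcal{G}_1$ is a building set, more specifically the factor condition. One must show that for any subset $\mathcal{C}_1 \subset \mathcal{G}_1$, every $\mathcal{G}_1$-factor of $\bigcap \mathcal{C}_1$ is obtained as the proper transform (or exceptional divisor) of some $\mathcal{G}$-factor of the corresponding intersection in $Y$, and conversely. This requires a careful case analysis depending on whether $E_0$ belongs to $\mathcal{C}_1$ and whether various $\widetilde{V}$'s lie over $V_0$. Once this bookkeeping is done correctly, the rest of the proof is a clean induction; everything else in the theorem is essentially a formal consequence of the building-set framework.
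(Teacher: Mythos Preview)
The paper does not prove this theorem; it is quoted from \cite{Li1} with the attribution ``{\em (\cite{Li1})}'' and introduced by the sentence ``We recall some results in \cite{Li1} which are needed in this paper.'' There is therefore no proof in the paper against which to compare your proposal.

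That said, your outline is broadly the strategy Li uses: induct on $|\mathcal{G}|$, blow up a minimal element first, and argue that the transformed collection on the blowup is again a building set so that induction applies. Two points deserve care if you intend this as an actual proof rather than a sketch. First, your formula $\widetilde{V\cap W}=\widetilde{V}\cap\widetilde{W}$ is not generally valid on all of $\mathrm{Bl}_{V_0}Y$; proper transforms of clean intersections can acquire extra components or fail to meet along $E_0$ unless one controls the position of $V_0$ relative to $V\cap W$, and this is exactly where Li's analysis of $\mathcal{G}$-factors enters. Second, whether one includes $E_0$ in $\mathcal{G}_1$ or not changes the bookkeeping: since $E_0$ is already a divisor, blowing it up is the identity, so including it is harmless but does shift what ``$\mathcal{G}_1$-factor'' means. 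You correctly flag the factor condition for $\mathcal{G}_1$ as the technical heart, and that is indeed where the work lies; the rest, as you say, is formal.
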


We explain terminologies used in Theorem \ref{Li1}.
 An {\em inclusion
order} (resp. a {\em building set order}) above is by definition a
total order $V_1,...,V_l$ of $\mathcal{G}=\{V_1,...,V_l \}$ if $i
< j$ whenever $V_i\subset V_j$ (resp. if $V_1,...,V_k$ form a
building set for any $k=1,...,l$). Hence, $Y_{\mathcal{G}}\cong
\mathrm{Bl}_{V_l}...\mathrm{Bl}_{V_1}Y$ as $Y$-varieties. Here one
should recall the convention \ref{notation} on the centers. A
subset $\mathcal{C}$ of a building set $\mathcal{G}$ is called
{\em nested} if there are a positive integer $k$ and a flag
$(W_1\subset W_2 \subset ... \subset W_k)$ such that every element
of $\mathcal{C}$ is a $\mathcal{G}$-factor of some $W_i$. Here
$W_i$ is an intersection of elements of $\mathcal{G}$.

\bigskip

For example, the Fulton-MacPherson configuration space $X[n]$ is
the wonderful compactification of $X^n$ with respect to the
building set $\{\Delta _I \subset X^n \ | \ I\subset N, |I|\ge
2\}$.

\subsection{Proof of Theorem  \ref{D} and Inductive
Construction of $X_{D}^{[n]}$}\label{Dproof}

Note that the collection of all subsets $D_{c,S}$ in $X^n$ is a
building set. Hence parts (\ref{D1}), (\ref{D3}) and
(\ref{D4}) of Theorem \ref{D} follow from Theorem \ref{Li1}. In particular,
$X_D^{[n]}$ can be constructed by iterated blowups of $X^n$ along
nonsingular centers (and the proper transforms of)  $$D_S :=\coprod
_{c,S} D_{c,S}$$ arrayed by an inclusion order. We may reshuffle
centers as:
\[ D_{\{1\}};D_{\{1,2\}},D_{\{2\}};D_{\{1,2,3\}},D_{\{1,3\}},D_{\{2,3\}},D_{\{3\}};...;
D_{\{1,2,...,n\}},...,D_{\{n\}},
\] keeping the same result $X_D^{[n]}$ after the blowup along the centers with
this building set order.

 The above ordering of centers provides an inductive construction of $X_D^{[n]}$.
 Define $X_D^{[n]+}$ to be the iterated blowups of
$X^{[n]}_D\times X$ along centers $D_{T^+}$, arrayed by an inclusion order, where $T^+=T\cup\{ n+1\}$,
$T\subset N$, and $|T|\ge 1$. (This space is not isomorphic to $X_D^{[n+1]}$
unless $D$ is a divisor.) Note that the flatness of the natural
projection $X_D^{[n]+} \ra X_D^{[n]}$ in Theorem \ref{D}
(\ref{D2}) holds since it is a map between nonsingular varieties
with equi-dimensional fibers. The projection is equipped with
sections provided by $\Delta _{\{i\}^+}\subset X_D^{[n]+}$,
$i=1,...,n$.

\subsection{Proof of Theorem \ref{Delta} and Inductive Construction of  $X_{D}[n]$}

We would like to take a sequence of blowups starting from $X^n$
along centers $D_S$ and $\Delta _I$, $S, I\subset N$, $|S|\ge 1,\
|I|\ge 2$. However they do not form a building set. (See Remark
3.3 for an example.) Hence we cannot apply Theorem \ref{Li1}
directly to $Y=X^n$. Instead, we use the wonderful
compactification in a two-step process. We will show in Proposition
\ref{poly} that altogether the proper transforms
$\widetilde{\Delta}_I$ of $\Delta _I$ in $X_D^{[n]}$ form a
building set. Therefore we can apply Theorem \ref{Li1} to
$Y=X^{[n]}_D$ with the building set $\{ \widetilde{\Delta} _I
\}_{I}$ where $I\subset N, |I|\ge 2$.
The technical lemma on blowups will be deferred to Lemma \ref{blowupLemma} at the
end of this subsection.
\medskip

The inductive construction starting from $X^n$ is given by the
iterated blowup with the order:
\[ \begin{array}{l} D_{\{1\}}; \\ D_{\{1,2\}},D_{\{2\}},\Delta
_{\{1,2\}}; \\
D_{\{1,2,3\}},D_{\{1,3\}},D_{\{2,3\}},D_{\{3\}}, \Delta
_{\{1,2,3\}},\Delta _{\{1,3\}},\Delta
_{\{2,3\}}; \\ \ \ \ \ \ \ \ \ \  \vdots   \\
D_{\{1,2,...,n\}},...,D_{\{1,n\}},...,D_{\{n-1,n\}}, D_{\{n\}},
\Delta _{\{1,2,...,n\}},...,\Delta _{\{1,n\}},...,\Delta
_{\{n-1,n\}}.
\end{array}\]

One can achieve this sequence from the sequence of the building
set orders:
\[\begin{array}{l} D_{\{1\}};D_{\{1,2\}},D_{\{2\}};D_{\{1,2,3\}},D_{\{1,3\}},D_{\{2,3\}},D_{\{3\}};...;
D_{\{1,2,...,n\}},...,D_{\{n\}};\\ \Delta _{\{1,2\}};\Delta
_{\{1,2,3\}},\Delta _{\{1,3\}},\Delta _{\{2,3\}};...; \Delta
_{\{1,2,...,n\}},...,\Delta _{\{n-1,n\}} \end{array} \]

 To see it, first note that all the centers $D_T$ and
$\Delta _I$ are \'etale locally linearized simultaneously in $X^n$,
and hence in an iterated blowup of $X^n$ along any set of the
centers, by Lemma \ref{blowupLemma} (\ref{linear}). In particular
this shows that the divisor $D_T$ is transversal to $\Delta _I$ in
any iterated blowup of $X_D^{[n]}$ along any set of all the
centers. Now we may rearrange the centers from the initial order
using the reordering of two transversal centers (Lemma
\ref{blowupLemma} (\ref{transversal})).

\medskip

 Define $X_D[n]^+$ as the blowup of $X_D[n]\times X$ along
$D_{S^+}$, $\Delta _{I^+}$, more precisely, along $D_{S^+}$ with the inclusion order first, then along
$\Delta _{I^+}$, also with the inclusion order, where
$S,I\subset N$ and $|S|\ge 1$, $ |I|\ge 2$. As before, the
projection $X_D[n]^+\ra X_D[n]$ has the sections provided by
$\Delta _{\{i\}^+}\subset X_D[n]^+$, $i=1,...,n$.

\begin{Prop}\label{poly}
\begin{enumerate}

\item\label{building1} Let $I_1$ and $I_2$ be partitions of $N$.
Then the intersection of proper transforms $\widetilde{\Delta}
_{I_1}$ and $ \widetilde{\Delta} _{I_2}$ in $X_D^{[n]}$ is the
proper transform $\widetilde{\Delta} _{I_1\wedge I_2}$ of the
intersection $\Delta _{I_1}\cap \Delta _{I_2}=\Delta _{I_1\wedge
I_2}$.

\item\label{building2} The collection of all diagonals
$\widetilde{\Delta} _I$, $I\subset N$, $|I|\ge 2$, is a building
set in $X_D^{[n]}$.

\end{enumerate}
\end{Prop}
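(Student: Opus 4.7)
The strategy for Proposition \ref{poly} is to reduce every claim to a local assertion about linear subspaces of affine space, via Lemma \ref{blowupLemma}(\ref{linear}), which simultaneously linearizes the centers $D_{c,S}$ and the polydiagonals $\Delta_I$ \'etale-locally on $X^n$ and ensures that this linear structure persists through the iterated blowup producing $X_D^{[n]}$.

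For part (\ref{building1}), the plan is to induct on the sequence of blowups defining $X_D^{[n]}$ (along centers $D_{c,S}$, in inclusion order) and show that at each stage the identity $\widetilde{\Delta}_{I_1}\cap \widetilde{\Delta}_{I_2}=\widetilde{\Delta}_{I_1\wedge I_2}$ persists, with the three strict transforms smooth and meeting cleanly. Crucially, no polydiagonal $\Delta_I$ is contained in any $D_{c,S}$ (nor is any intersection of polydiagonals, which is again a polydiagonal of the same kind), so the strict transforms remain nonempty after each blowup. The inductive step then reduces, via the linearization, to the following linear-algebra assertion: if $V_1, V_2$ are linear subspaces of $\mathbb{A}^N$ with $V_1\cap V_2$ not contained in a linear center $Z$, then in $\mathrm{Bl}_Z \mathbb{A}^N$ one has $\widetilde{V_1}\cap \widetilde{V_2}=\widetilde{V_1\cap V_2}$ for strict transforms. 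This is a chart-by-chart verification.

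For part (\ref{building2}), once (\ref{building1}) is in hand the two building-set axioms both follow. Clean pairwise intersection of $\widetilde{\Delta}_{I_1}$ and $\widetilde{\Delta}_{I_2}$ (for subsets $I_1,I_2\subset N$ with $|I_j|\ge 2$) is \'etale-local and reduces to clean intersection of linear subspaces. For the $\mathcal{G}$-factor axiom, consider any subcollection $\mathcal{C}=\{\widetilde{\Delta}_{I_j}\}_j$. By (\ref{building1}), $\bigcap \mathcal{C}=\widetilde{\Delta}_K$, where $K=\{K_0,K_1,\dots,K_l\}$ is the partition of $N$ whose non-singleton blocks $K_1,\dots,K_l$ are exactly the connected components of the graph on $N$ whose edges come from the $I_j$. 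A subset $J\subset N$ with $|J|\ge 2$ satisfies $\widetilde{\Delta}_J\supset\widetilde{\Delta}_K$ iff $J\subset K_i$ for some $i$, so the $\mathcal{G}$-factors of $\bigcap \mathcal{C}$ are precisely $\widetilde{\Delta}_{K_1},\dots,\widetilde{\Delta}_{K_l}$. By (\ref{building1}) again, their joint intersection equals $\widetilde{\Delta}_K=\bigcap \mathcal{C}$, and they intersect transversely because the blocks $K_i$ are pairwise disjoint subsets of $N$, so the tangent spaces of the corresponding polydiagonals sit in complementary coordinate directions after the linearization.

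The main obstacle is the inductive step in (\ref{building1}): checking, at each stage of the iterated blowup, that the next center meets each of $\Delta_{I_1}$, $\Delta_{I_2}$, and $\Delta_{I_1\wedge I_2}$ cleanly, and that the strict-transform identity is preserved through the blowup. Both issues collapse to the linear-subspace computation in affine charts, which is the content of Lemma \ref{blowupLemma}(\ref{linear}) combined with a direct inspection of the blowup charts.
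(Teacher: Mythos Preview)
Your reduction in part~(\ref{building1}) hinges on the linear-algebra assertion, but that assertion is false as stated. In $\mathbb{A}^4$ with coordinates $(x,y,z,w)$, take $V_1=\{x=z\}$, $V_2=\{y=z\}$, and $Z=\{x=y=0\}$; then $V_1\cap V_2=\{x=y=z\}\not\subset Z$. In the blowup chart $y=xu$ (coordinates $(x,u,z,w)$, exceptional divisor $\{x=0\}$) one finds $\widetilde{V_1}=\{x=z\}$ and $\widetilde{V_2}=\{xu=z\}$, so
\[
\widetilde{V_1}\cap\widetilde{V_2}=\{x=z,\ x(u-1)=0\}=\{x=z=0\}\ \cup\ \{x=z,\ u=1\},
\]
whereas $\widetilde{V_1\cap V_2}=\{x=z,\ u=1\}$ is only the second component. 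Thus a ``chart-by-chart verification'' cannot succeed without an extra hypothesis. What makes the case of polydiagonals versus the centers $D_{c,S}$ work is a special feature of that arrangement (roughly, at every stage the strict transform of $\Delta_I$ is still cut out by \emph{equalities of coordinates} in suitable charts, not merely by linear equations), and this must be formulated and propagated through the induction. Your proposal does not isolate or verify such a condition, so the inductive step has a genuine gap.

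The paper avoids the blowup-by-blowup induction altogether. It uses the $n$ sections $\sigma_a$ of the family $X_D^{[n]+}\to X_D^{[n]}$ (coming from $\Delta_{\{a\}^+}$) and identifies $\widetilde{\Delta}_I$ with the closed locus $\{\sigma_a=\sigma_b:\ a,b\ \text{in the same block of}\ I\}$; this is clear at general points and hence on the closure. Part~(\ref{building1}) is then immediate, since intersecting two such loci imposes the union of the section-equalities, i.e.\ gives exactly $\widetilde{\Delta}_{I_1\wedge I_2}$. Once (\ref{building1}) is established by this route, your outline for part~(\ref{building2}) is essentially correct.
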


\begin{proof}
Note that $\Delta _I$ in $X_D^{[n]}$ coincides with the variety
defined by equations \[ \sigma  _a = \sigma _b, \ \forall a,b\in
I_i,\ I_i\in I
\] where
$\sigma _a$ is the section of $X_D^{[n]+}\ra X_D^{[n]} $, induced
by $\Delta _{\{a\}^+}$. This can be seen by considering the
imposed equation at {\em general} points. Now the proof is
straightforward.
\end{proof}

 \noindent{\em Proof of Theorem \ref{Delta}
(\ref{nested}).} For simplicity assume that $D$ is connected.

($\Rightarrow$). The condition on the pair $S_i$ and $S_k$ ($I_j$
and $I_l$, respectively) is a direct consequence of Theorem
\ref{Li1}. Suppose that both $S \cap I$ and $I\setminus S$ are
nonempty. Then Lemma \ref{blowupLemma} (\ref{disjoint}) shows that
$\widetilde{D}_S\cap \widetilde{\Delta} _I$ is empty.

($\Leftarrow$). Let  $\{ D_{S_i}, \Delta _{I_j} \}_{i,j}$ be a
nested set and let $V$  be the transversal intersection $\bigcap
_i D_{S_i} (X^{[n]}_D)$. Then an argument similar to the proof of
Proposition \ref{poly} shows that the collection
\[ \mathcal{G}:= \{V\cap \Delta _{I} (X^{[n]}_D) \ | \ I\subset
N,\ |I|\ge 2, \ \{ S_i, I\} _i \text{ is nested} \}
\] is a building set of $V$.
According to Lemma \ref{blowupLemma} (\ref{funct}),
$\widetilde{V}$ in $X_D[n]$ coincides with the wonderful
compactification $V_{\mathcal{G}}$ of $V$. Now since $\{ V\cap
\Delta _{I_j} (X^{[n]}_D)\}_j$ is nested, we conclude that
$\widetilde{V}\cap \bigcap _j\widetilde{\Delta} _{I_j}$ in
$X_D[n]$ is nonempty and transversal by Theorem \ref{Li1}. Also,
$\tilde{V}$ is $\bigcap D_{S_i}(X_D[n])$ due to Lemma
\ref{blowupLemma} (\ref{iso1}) and $\widetilde{\bigcap D _{S_i}}
\subset \bigcap \widetilde{D _{S_i}}$ in $X_D[n]$. This completes
the proof. \hfill$\Box$

\medskip

Note that the above proof of (\ref{nested}) shows the statement
(3) of Theorem 2 is also true.

\begin{Lemma}\label{blowupLemma}
Let $Z, Z_i, V, V_i$, $i=1,...,k$  be nonsingular subvarieties of
a nonsingular variety $X$, let $\pi:  \mathrm{Bl}_Z X \rightarrow
X$ be the blowup map along $Z$ and let $E$ be the exceptional
divisor.

\begin{enumerate}

\item\label{transversal} If $Z_1$ and $Z_2$ intersect transversely, then
$\mathrm{Bl}_{\widetilde{Z_2}}\mathrm{Bl}_{\mathrm{Z_1}} Y =
\mathrm{Bl}_{\widetilde{Z_1}}\mathrm{Bl}_{\mathrm{Z_2}} Y $.

\item\label{linear} If $Z$, $V_i$, $i=1,...,k$ are \'etale locally linearized in
$X$ simultaneously, then so are their transforms in
$\mathrm{Bl}_ZX$, and in particular $V_i$ and $V_j$ for any $i,j$
intersect cleanly.

\item\label{iso1} If $V$ meets $Z$ transversally, then
$\widetilde{V} = \pi^{-1}(V)$.

\item\label{funct} If $V$ and $Z$ intersect cleanly and $V$ is not
contained in $Z$, then $\widetilde{V}$ is the blowup of $V$ along
$Z\cap V$.

\item\label{disjoint} Assume that  $V_1$ and $ V_2$ intersect
cleanly.
 If $V_1\cap V_2\subset Z  \varsubsetneq V_1$, then $\widetilde{V}_1$and
$\widetilde{V}_2$ are disjoint.

\end{enumerate}
\end{Lemma}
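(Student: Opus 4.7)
My plan is to reduce every part to an explicit verification in an étale local model in which the subvarieties involved are coordinate linear subspaces of an affine space $\A^N$, and then to invoke the standard chart description of the blowup $\mathrm{Bl}_Z \A^N$ along a coordinate subspace. In each part the hypothesis (transversality, clean intersection, or simultaneous étale local linearization) guarantees that such a local model exists, so each assertion becomes a chart-by-chart computation.

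For (1) and (2), the reduction is essentially immediate. Transversality in (1) permits coordinates in which $Z_1$ and $Z_2$ are cut out by disjoint sets of linear equations, and the iterated blowup in either order is then identified with the evident subvariety of $\A^N \times \PP^{r-1} \times \PP^{s-1}$, which is symmetric in the two centers. For (2), simultaneous linearization of $Z$ and the $V_i$ is preserved chart-by-chart in $\mathrm{Bl}_Z \A^N$, whence clean intersection of the transforms follows as a formal consequence. For (3) and (4), a linearized chart shows that the proper transform $\widetilde{V}$ is cut out in $\mathrm{Bl}_Z \A^N$ by the same reduced ideal as $V$; transversality then yields $\pi^{-1}(V) = \widetilde{V}$ in (3), and clean intersection identifies $\widetilde{V}$ chart-by-chart with $\mathrm{Bl}_{V\cap Z} V$ for (4), which can equivalently be obtained from the universal property of blowing up.

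The main work is part (5). After a simultaneous linearization supplied by (2), I write $V_1, V_2, Z$ as coordinate subspaces indexed by subsets $A, B, C$ of $\{1, \ldots, N\}$; the hypotheses $V_1 \cap V_2 \subset Z$, $Z \subsetneq V_1$, and the implicit $V_2 \not\subset Z$ (needed so that $\widetilde{V_2}$ denotes the proper transform) translate respectively to $C \subset A \cup B$, $A \subsetneq C$, and $C \not\subset B$. Away from the exceptional divisor $E$, one has $\widetilde{V_1} \cap \widetilde{V_2} = (V_1 \cap V_2) \setminus Z = \emptyset$. On $E$, by (4), $\widetilde{V_i} \cap E$ is the projectivized normal subbundle $\PP(N_{V_i \cap Z / V_i})$ inside $\PP(N_{Z/X})|_{V_i \cap Z}$, and in coordinates its fibers are spanned by the normal directions indexed by $C \setminus A$ for $i = 1$ and by $C \setminus B$ for $i = 2$. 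Since $C \subset A \cup B$ forces $(C \setminus A) \cap (C \setminus B) = \emptyset$, these subbundles are fiberwise linearly disjoint, their projectivizations are fiberwise disjoint, and hence $\widetilde{V_1} \cap \widetilde{V_2} = \emptyset$ on $E$ as well. The hardest step is thus (5), whose ingredients—the linear model from (2) and the identification of $\widetilde{V_i} \cap E$ from (4)—force the parts to be proved in order; once this ordering is in place, each verification is a short chart calculation.
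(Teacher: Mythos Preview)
Your chart-by-chart treatment of (1)--(4) is fine; the paper simply declares these standard and proves only (5). The substantive comparison is therefore in (5), where the two arguments are close in spirit but differ in one important respect.

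The paper's proof of (5) is a three-line coordinate-free contradiction: if $\widetilde{V_1}\cap\widetilde{V_2}$ were nonempty, over some $p\in V_1\cap V_2\subset Z$ there would exist $v_i\in T_pV_i$ with $[v_1]=[v_2]\ne 0$ in $(N_{Z/X})_p$; since $Z\subset V_1$ gives $T_pZ\subset T_pV_1$, one gets $v_2\in T_pV_1\cap T_pV_2=T_p(V_1\cap V_2)\subset T_pZ$, whence $[v_2]=0$. No linearization, and no appeal to (2) or (4), is needed.

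Your argument reaches the same conclusion via the combinatorial identity $(C\setminus A)\cap(C\setminus B)=\emptyset$, which is exactly the coordinate shadow of the paper's tangent-space step. The gap is in how you get to that linear model. You write that the simultaneous linearization of $V_1,V_2,Z$ is ``supplied by (2)'', but (2) says only that an \emph{already existing} simultaneous linearization survives a blowup; it does not manufacture one from the hypotheses of (5), which assert merely that $V_1$ and $V_2$ intersect cleanly. What you actually need is the separate (and not entirely trivial) claim that clean intersection of $V_1,V_2$ together with $V_1\cap V_2\subset Z\subset V_1$ forces $V_1,V_2,Z$ to be simultaneously linearizable. This is true---after linearizing $V_1,V_2$, one can linearize the flag $V_1\cap V_2\subset Z$ inside $V_1$ by a coordinate change fixing $V_1\cap V_2$ setwise, and the product extension to $X$ then preserves $V_2$---but it requires its own justification and is not a consequence of (2). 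The paper's tangent-space argument sidesteps this issue entirely, which is what makes it shorter; your route works once this extra step is filled in, and also makes transparent why the order of the lemma's parts is irrelevant to proving (5).
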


\begin{proof} The only nonstandard result is
(\ref{disjoint}), which we prove here. Assume that they are not
disjoint. Then for some point $p\in V_1\cap V_2$, there are $v_i
\in T_pV_i$ such that in the normal bundle $N_{Z/X}$, $
[v_1]=[v_2]\ne 0$. Since $TZ \subset TV_1$, $v_2$ is an element of
$TV_1$ as well as $TV_2$. It implies that $[v_2] = 0$ in $N_{Z/X}$
since $TV_1 \cap TV_2 =T(V_1 \cap V_2) \subset TZ$. This is a
contradiction.
\end{proof}

\section{Some more properties}

\subsection{Stable degenerations}\label{expuniv} For simplicity
assume that $D$ is connected. Note that $X_D^{[n]+}\ra X_D^{[n]}$
is a flat family of stable degenerations of $X$ with $n$ smooth
labeled points away from $D$ (see subsection 2.2). The labeled points may not be
distinct. Stability means that  every closed fiber $F$ has no
nontrivial automorphism fixing the following data:
the natural map $F \ra X$; $F\cap \widetilde{D}_{\{n+1\}}$; and
the marked points $F\cap \widetilde{\Delta} _{\{i,n+1\}}$,
$i=1,...,n$. The fibers are normal crossing varieties, \'etale
locally the form  $xy=0$. The generic fiber over $D_S(X_D^{[n]})$ is
the coproduct
$$\mathrm{Bl}_D X\coprod _{\PP (N_{D/X})} \PP (N_{D/X}\oplus {\bf 1})$$
of  $\mathrm{Bl}_D X$ and $\PP (N_{D/X}\oplus {\bf 1})$ along $\PP
(N_{D/X})$. The points labeled by $a \in S$ are in $\PP
(N_{D/X}\oplus {\bf 1})\setminus (\PP (N_{D/X}) \cup \PP ({\bf
1}))$ and the other points are in $\mathrm{Bl}_D X \setminus \PP
(N_{X/S})$. In general, $\Delta _{\{a\} ^+}$ is disjoint from
$D_{\{n+1\}}$ in $X_D^{[n]+}$ by Lemma \ref{blowupLemma}
(\ref{disjoint}).

\medskip

Similarly, $X_D[n]^+ \ra X_D[n]$ is a flat family of stable
degenerations of $X$ with $n$ {\em distinct} smooth labeled points
away from $D$ (see subsection 2.3). It is equipped with sections
$\sigma _i$, which are disjoint to each other. Specifically, the
fibers of $X_D[n]^{+}$ over points in the boundary of $X_D[n]$ are
Fulton-MacPherson stable degenerations of fibers of $X_D^{[n]+}$:
In a fiber $F$ of $X_D^{[n]+}$ the labeled points $F\cap
\widetilde{\Delta}_{\{i,n+1\}}$, $i=1,...,n$, are away from
$\widetilde{D}:=F\cap \widetilde{D}_{\{n+1\}}$, but may come
together at some points of $F\setminus \widetilde{D}$. Blow up all
such points $x\in F\setminus \widetilde{D}$ and then glue copies
of $\PP (T_x\oplus \CC )$  along the exceptional divisors
$\PP(T_x)$ to obtain a new modification of $X$ in which the points
in the configuration are now distinct. The stability is similar to
the above case.

\subsection{Group Action by $S_n$} Let $S_n$ be the symmetric
group on $n$ letters. There is a natural $S_n$ action on the space
$X_D[n]$ such that the projection $X_D[n]\ra X^n$ is
$S_n$-equivariant. By Theorem 5.2 in \cite{BG}, all stabilizers
are solvable.

\subsection{Remark}
In general, the space $X_D[n]$ is not isomorphic to the one-step
closure of $(X\setminus D)^n\setminus \bigcup _{|I|\ge 2} \Delta
_I$, that is, the closure in the product
\[ X^n\times  \prod _{c,S\subset N}\mathrm{Bl}_{D_{c,S}}X^n\times
\prod _{I\subset N,\  |I|\ge 2} \mathrm{Bl}_{\Delta _I} X^n . \]
For example, take $X=\CC ^2$ with $D=\{ (x,y)\in\CC ^2 \ | \ y
=0\}$ and consider the limits of $((t,at),(2t,bt))$, as $t$ goes
to $0$. Then the limit in $X_D[2]$ does not depend on $a,b$.
However the limit in the one-step closure depends on $a$, $b$.

\subsection{Examples}

\subsubsection{$\overline{M}_{0,n}$} Let $n\ge 3$. The moduli space
$\overline{M}_{0,n}$ of $n$-pointed  stable rational curves
coincides with $X_{D}[n-3]$ where $X=\PP ^1$ and $D$ consists of three
distinct  points. Indeed, the inductive construction is
exactly the blowup construction of $\overline{M}_{0,n}$ given by
Keel (\cite{Keel}).

\subsubsection{$T_{d,n}$} Let $n\ge 2$. Take $X=\PP ^d$ and let $D$
be a hyperplane. Note that the group $G$ of automorphism of $X$
fixing all points in $D$ is isomorphic to $\CC ^*\ltimes \CC ^d$.
The natural action of the group $G$  on $X_D[n]$ is free and the
quotient $X_D[n]/G$ is isomorphic to the compactification
$T_{d,n}$ studied by Chen, Gibney, and Krashen \cite{CGK}. It
compactifies the configuration space of $n$ distinct labeled
points in $\CC ^d$ modulo $\CC ^*\ltimes \CC ^d$.

\bibliography{math}


\def\cprime{$'$}
\providecommand{\bysame}{\leavevmode\hbox
to3em{\hrulefill}\thinspace}

\end{document}